\DeclareMathSymbol{*}{\mathbin}{symbols}{"03}
\def\R{{\mathbb R}}
\def\LL1{{\L(1)}}
\def\L{{\mathcal L}}
\def\max{{\rm max}}
\newtheorem*{thr*}{Theorem}
\newtheorem{theorem}{Theorem}[section]
\newtheorem*{lm*}{Lemma}
\newtheorem{df}{Definition}
\begin{document}
\title{On the Density of naturals $n$ coprime to $\lfloor P(n) \rfloor$ for certain Classes of Polynomials}
\author{Aahan Chatterjee}

\maketitle

\vspace{-.2in}
\begin{abstract}
We obtain asymptotic bounds on the number of natural numbers less than $X$ satisfying $\gcd{\left(n,\lfloor P(n) \rfloor\right)}=1$, under some diophantine conditions on the coefficient of $x$ in $P$, and show that the density of such naturals is exactly $\dfrac{1}{\zeta{\left(2\right)}}$.
\end{abstract}


\section{Introduction}
If \( m \) and \( n \) are two natural numbers chosen at random, the probability that they are coprime is known to be \( \frac{6}{\pi^2} \). This result extends beyond independent integers and can hold when \( m \) and \( n \) are functionally related. For instance, Watson proved in \cite{Watson} that if \( \alpha \) is irrational, the density of natural numbers \( n \) satisfying \( \gcd(n, \lfloor \alpha n \rfloor) = 1 \) is precisely \( \frac{6}{\pi^2} \). Similarly, Lambek and Moser demonstrated in \cite{Lambek_Moser} that if \( f(1), f(2), \dots \) is a non-decreasing sequence of non-negative integers that grows slowly to infinity, and if the intervals over which \( f(m) = n \) grow slowly with \( n \), then the probability that \( f(n) \) is relatively prime to \( n \) is also \( \frac{6}{\pi^2} \).

In this paper, we extend Watson’s result to a broader class of functional relationships. Roughly speaking, we show that if \( P \) is a polynomial with real coefficients and the coefficient of \( x \) in \( P \) is not too well-approximable, then the probability that \( n \) and \( \lfloor P(n) \rfloor \) are coprime is exactly \( \frac{6}{\pi^2} \).

\subsection*{Main Result}
More specifically:
\begin{thr*}
Let $P \in \mathbb{R}[x]$ be a polynomial such that the coefficient of $x$ in $P$ is \textbf{non-Liouville} (we shall define this in \textbf{Definition \ref{df}}). Then we have that:
\begin{equation*}
\begin{split}
S(X)&=\dfrac{X}{\zeta{\left(2\right)}}+O\left(\dfrac{X}{\log{\log{X}}^{1\slash 3}}\right)\quad \text{and hence}\quad \lim_{X \rightarrow \infty}\dfrac{S(X)}{X}=\frac{1}{\zeta(2)}\\ 
\text{where} \quad S(X)&=: \{x \le X: \gcd{\left(x,\lfloor P(x) \rfloor\right)}=1\}.
\end{split}    
\end{equation*}
\end{thr*}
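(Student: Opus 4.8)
The plan is to count $S(X)$ by inclusion–exclusion over primes dividing $n$, exploiting the Möbius function: writing $\mathbf{1}_{\gcd(n,\lfloor P(n)\rfloor)=1} = \sum_{d \mid n,\, d \mid \lfloor P(n)\rfloor} \mu(d)$, we get
\begin{equation*}
S(X) = \sum_{n \le X} \sum_{\substack{d \mid n \\ d \mid \lfloor P(n)\rfloor}} \mu(d) = \sum_{d \le X} \mu(d) \, \#\{n \le X : d \mid n,\ d \mid \lfloor P(n)\rfloor\}.
\end{equation*}
Substituting $n = dm$ with $m \le X/d$, the inner count becomes $\#\{m \le X/d : \lfloor P(dm)\rfloor \equiv 0 \pmod d\}$. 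So the heart of the matter is understanding, for each fixed $d$, the density of $m$ for which $\lfloor P(dm)\rfloor$ lands in a prescribed residue class mod $d$. I expect the main term $X/\zeta(2)$ to come from the "expected" density $1/d$ of such $m$, summed against $\mu(d)/d$, giving $\sum_d \mu(d)/d^2 = 1/\zeta(2)$; everything else must be shown to be an error term.

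The key step is thus an equidistribution statement: for fixed $d$, as $m$ ranges over $[1, Y]$, the quantity $\lfloor P(dm)\rfloor \bmod d$ is equidistributed across residues, with a quantitative error. The trick (following Watson) is that $\lfloor P(dm)\rfloor \equiv r \pmod d$ is controlled by the fractional part $\{P(dm)/d\}$: indeed $\lfloor P(dm)\rfloor = d\lfloor P(dm)/d\rfloor + (\lfloor P(dm)\rfloor \bmod d)$, so the residue is determined by which subinterval of length $1/d$ the fractional part $\{P(dm)/d\}$ falls into (up to boundary effects of size $O(1)$ coming from $\lfloor P(dm)/d\rfloor$ versus $\lfloor \lfloor P(dm)\rfloor / d\rfloor$). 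Thus I would reduce to showing that the sequence $P(dm)/d$ is equidistributed mod $1$ as $m$ varies, with a Weyl-type error bound. Here is where the non-Liouville hypothesis on the linear coefficient $\alpha$ of $P$ enters: write $P(x) = \sum_j c_j x^j$; then $P(dm)/d = \sum_{j \ge 2} c_j d^{j-1} m^j + \alpha m + c_0/d$, and the leading behavior mod $1$ for equidistribution purposes is governed by $\alpha m$ (the higher-degree terms have integer-scaled-by-$d$ coefficients which, while not themselves forcing equidistribution, can be handled by Weyl differencing down to a linear exponential sum whose frequency involves $\alpha$). Applying Weyl's inequality / van der Corput, the error in the equidistribution of $P(dm)/d \bmod 1$ over $m \le Y$ is bounded in terms of the rational approximations to $\alpha$, and the non-Liouville condition guarantees $\|q\alpha\| \gg q^{-C}$ for some fixed $C$, which gives a power-saving (or at least $\log$-saving) bound uniform enough in $d$.

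I would then assemble the estimate by splitting the $d$-sum at a threshold $D = D(X)$ (something like $D = (\log\log X)^{c}$ or a small power of $\log X$): for $d \le D$ use the equidistribution bound to replace the inner count by $Y/d + (\text{error})$, contributing $X \sum_{d \le D} \mu(d)/d^2 + (\text{errors}) = X/\zeta(2) + O(X/D) + (\text{errors})$; for $D < d \le X$ bound the inner count trivially by $X/d + 1$ and use the fact that $\sum_{D < d \le X} 1/d^2 \ll 1/D$ plus a $\sum_{d \le X} 1 = O(X)$-type tail that must be shown negligible — this last part is delicate and is really the analogue of the argument that in Watson's theorem the "large $d$" range contributes lower order, typically handled by a more careful second-moment or by noting that for most large $d$ the congruence condition is genuinely restrictive. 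The main obstacle I anticipate is making the equidistribution bound for $P(dm)/d \bmod 1$ \emph{uniform in $d$} with a strong enough error term: Weyl differencing a degree-$k$ polynomial loses a factor like $2^{-k}$ in the exponent and the relevant frequency after differencing is a multiple of $\alpha$ scaled by powers of $d$, so one must track how the $d$-dependence interacts with the Diophantine bound $\|q\alpha\| \gg q^{-C}$ and ensure the threshold $D$ can be taken slowly growing; balancing this against the tail sum over large $d$ is what produces the final error term $O(X/(\log\log X)^{1/3})$, and getting the exponent $1/3$ (rather than something worse) will require care in optimizing $D$.
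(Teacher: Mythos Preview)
Your overall architecture---M\"obius inversion, writing $n=dm$, and reducing $d\mid\lfloor P(dm)\rfloor$ to the fractional part $\{P(dm)/d\}$ lying in an interval of length $1/d$---is exactly right and matches the paper. But there are two genuine gaps.

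First, the Weyl-sum bound. You write that Weyl differencing reduces to ``a linear exponential sum whose frequency involves $\alpha$''. It does not: classical Weyl differencing of a degree-$k$ polynomial produces a linear phase whose frequency is $k!\,h_1\cdots h_{k-1}$ times the \emph{leading} coefficient, here $m c_k d^{k-1}$, not the linear coefficient $\alpha$. So a non-Liouville hypothesis on $\alpha$ gives you nothing through that route. The paper instead invokes Wooley's minor-arc theorem (from Vinogradov's mean value theorem via efficient congruencing), which says that if the full Weyl sum is large then \emph{all} coefficients $\alpha_1,\dots,\alpha_k$ admit a simultaneous rational approximation with small common denominator; applying this with $j=1$ gives $|q\cdot m\alpha - a|\le X^{\delta-1}$ for some $q\le X^\delta$, and \emph{this} is what contradicts the non-Liouville condition on $\alpha$. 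You need something of this strength (or an alternative simultaneous-approximation result) rather than plain Weyl.

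Second, the tail in $d$. Your proposed trivial bound $|\mathcal A_d|\le X/d+1$ summed over $D<d\le X$ gives $\sim X\log(X/D)$, which swamps the main term; and even using the equidistribution error $(X/d)^{1-\mu}$ over the full range $d\le X$ yields $\sum_{d\le X}(X/d)^{1-\mu}\asymp X$, again useless. The paper does not split the M\"obius sum at a small threshold $D$. Instead it introduces a sieve level $z=X^{c/\log\log X}$, works with $S(X,z)$ (coprimality only to primes $<z$), and splits the sum over $d\mid P_z$ at $X/\log^\epsilon z$. For $d<X/\log^\epsilon z$ the error sum $\sum (X/d)^{1-\mu}$ is then $O(X/\log^{\epsilon\mu} z)$. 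For $X/\log^\epsilon z<d\le X$ with $d\mid P_z$, every prime factor of $d$ is $<z$, so $d$ must have $\omega(d)>\log d/\log z\gg \log\log X$ prime factors; by Hardy--Ramanujan such $d$ are $O(X/(\log\log X)^{1/3})$ in number, which is where the stated error term comes from. Finally $S(X)-S(X,z)$ is controlled separately via Mertens. This sieve-plus-normal-order maneuver is the missing idea; a direct split at a small $D$ as you propose cannot close the argument.
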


\subsection*{Heuristic}
For a prime \( p \), the probability that \( p \mid \gcd{\left(n, \lfloor P(n)\rfloor\right)} \) is equal to \( \frac{1}{p} \) times the probability that \( \left\{\frac{P(pn)}{p}\right\} < \frac{1}{p} \), which is \( \frac{1}{p^{2}} \). Therefore, the probability that \( p \) does not divide \( \gcd{\left(n, \lfloor P(n)\rfloor\right)} \) is \( 1-\frac{1}{p^{2}} \). \newline\newline
Heuristically, we expect that the density of natural numbers \( n \) \newline
satisfying \( \gcd{\left(n, \lfloor P(n)\rfloor\right)} = 1 \) is 
\[
\prod_{p \;\text{prime}} \left(1-\frac{1}{p^{2}}\right).
\]
This product is well-known to evaluate to \( \frac{6}{\pi^2} \), consistent with earlier results for simpler cases. This motivates us to proceed with sieving to establish the result rigorously. However, precise bounds require good control over the error terms, which arise when applying Weyl's equidistribution theorem. To manage these errors effectively, we impose certain Diophantine conditions on the coefficient of \( x \) in \( P \).

\section{Diophantine Approximation}
\begin{df}\label{df} Let $\omega\in\R$ and $B>0$. We say that a number $\alpha\in\R$ is Liouville if we have that $\forall \, n \in \mathbb{N}, \exists$ a pair of integers $(p,q)$ with $q>1$ such that
\begin{equation*}
\left|\alpha-\dfrac{p}{q} \right|\leq q^{-n}.
\end{equation*}
\end{df}

\section{Bounds for Weyl Sums}
Now we obtain bounds on the weyl sums of \quad$\dfrac{mP(dx)}{d}$, \quad independent of $d$ and $m$, for $m$ in a certain range.

\begin{theorem}\label{weyl} Suppose $P \in \mathbb{R}[x]$ with coefficient of $x$, $\alpha$, being \textbf{non-Liouville}. Then we can find constants $\tau,\rho,X_0 > 0$ such that 
\begin{equation*}s_m(X)=:\displaystyle\sum_{x \le X} e \left(m \frac{P(dx)}{d} \right) \le X^{1-\tau} \quad \text{for all}\quad m<X^{\rho} \quad \text{and} \quad X \ge X_0.
\end{equation*}
\end{theorem}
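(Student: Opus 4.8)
The plan is to split on $k:=\deg P$ and, in each case, to feed everything into the single Diophantine input we are given. First, Definition~\ref{df} (together with the observation that every rational number is Liouville, so that $\alpha$ is in particular irrational) provides constants $\kappa\ge 1$ and $c>0$ for which
\[
\|q\alpha\|\ \ge\ c\,q^{-\kappa}\qquad\text{for all integers }q\ge 1 ,
\]
where $\|t\|$ denotes the distance from $t$ to the nearest integer. It is also convenient to record the expansion
\[
\frac{mP(dx)}{d}\ =\ m a_k d^{k-1}x^k+\cdots+m a_2 d\,x^2+m\alpha\,x+\frac{m a_0}{d},
\]
whose salient feature is that the coefficient of $x$ equals $m\alpha$ with no dependence on $d$, while $d$ enters only through the coefficients of $x^2,\dots,x^k$ --- coefficients we will never need to control.

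If $k=1$ the sum is geometric: $s_m(X)=e(ma_0/d)\sum_{x\le X}e(m\alpha x)$, so $|s_m(X)|\le \tfrac12\|m\alpha\|^{-1}\le \tfrac1{2c}m^{\kappa}<\tfrac1{2c}X^{\rho\kappa}$ whenever $m<X^{\rho}$. Choosing $\rho<1/\kappa$ and then $0<\tau<1-\rho\kappa$ makes this $\le X^{1-\tau}$ for all $X\ge X_0$, with $X_0$ depending only on $c,\tau,\rho,\kappa$.

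If $k\ge 2$, the engine is Weyl's inequality, but it has to be used in its packaged minor-arc form rather than applied directly to the leading coefficient $m a_k d^{k-1}$: for unfavourable $d$ --- e.g.\ when $a_k\in\Q$ --- that coefficient is a rational with bounded denominator, about which Weyl's inequality says nothing. The input I would invoke is the standard statement that for each $k$ there are constants $c_k,C_k>0$, depending only on $k$, such that if $0<\tau\le c_k$, $N$ is large, and
\[
\Bigl|\sum_{x\le N}e\bigl(\gamma_k x^k+\cdots+\gamma_1 x+\gamma_0\bigr)\Bigr|>N^{1-\tau},
\]
then there is an integer $q$ with $1\le q\le N^{C_k\tau}$ and $\|q\gamma_j\|\le N^{C_k\tau-j}$ for every $1\le j\le k$ --- that is, the coefficient vectors producing a large Weyl sum lie on the major arcs, with constants uniform in the coefficient vector (hence in $d$). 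Suppose, for contradiction, that $s_m(X)>X^{1-\tau}$ for some $m<X^{\rho}$ and some large $X$. Taking $N=X$ and $\gamma_1=m\alpha$, the $j=1$ conclusion yields an integer $q\le X^{C_k\tau}$ with $\|q\,m\alpha\|\le X^{C_k\tau-1}$, while the Diophantine estimate above, applied to the integer $qm$, gives $\|q\,m\alpha\|=\|(qm)\alpha\|\ge c\,(qm)^{-\kappa}\ge c\,X^{-(C_k\tau+\rho)\kappa}$. For large $X$ these are incompatible as soon as $(C_k\tau+\rho)\kappa<1-C_k\tau$; this holds if $\tau$ is taken small enough that $C_k\tau(1+\kappa)<1/2$ and then $\rho$ is taken with $\rho<1/(2\kappa)$. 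Fixing such $\tau,\rho$ and a sufficiently large $X_0$ gives the contradiction, so $s_m(X)\le X^{1-\tau}$ for all $m<X^{\rho}$ and $X\ge X_0$.

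The real difficulty is confined to the case $k\ge 2$, and more precisely to the fact that the hypothesis constrains the coefficient of $x^1$ rather than the leading one. Plain Weyl differencing collapses a degree-$k$ polynomial to a linear polynomial whose slope is a multiple of $a_k h_1\cdots h_{k-1}$ and has lost every trace of $\alpha$, so by itself it cannot exploit the hypothesis; extracting Diophantine control of the coefficient of $x$ is exactly the descent built into the minor-arc statement --- pin down the leading coefficient modulo a small denominator, pass to arithmetic progressions, pin down the next coefficient, and so on down to the linear term --- and it is in carrying out that descent with constants independent of $d$ that the work lies. What remains is then the routine matter of choosing $\tau$ and $\rho$ small, in that order, relative to the given $\kappa$ and the $k$-dependent constant $C_k$.
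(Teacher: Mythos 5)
Your proposal is correct and follows essentially the same route as the paper: invoke a ``large Weyl sum implies simultaneous rational approximation of all coefficients'' statement uniform in the coefficient vector (the paper uses Wooley's Theorem~1.6, which plays exactly the role of your packaged minor-arc lemma), read off the $j=1$ conclusion for the coefficient $m\alpha$, and contradict the non-Liouville irrationality measure of $\alpha$ once $\tau$ and $\rho$ are chosen small in terms of the exponent $\kappa$ (the paper's $\omega$). Your separate treatment of $k=1$ and the explicit remark that plain Weyl differencing on the leading coefficient would fail are fine additions but do not change the substance of the argument.
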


To Prove this we will use the following Theorem.
\begin{theorem}\cite[Theorem 1.6]{Wooley_2012}\label{Wooley} Let $\tau$ and $\delta$ be reals such that $\tau^{-1} \ge 4k(k-1)$ and $\delta>k \tau$. Suppose that $X$ is sufficiently large in terms of $k,\delta$ and $\tau$ and suppose that $|f_{k}(\alpha;X)|>X^{1-\tau}$, then $\exists q,a_1,\cdots a_k \in \mathbb{Z}$ such that \begin{equation*} 1 \le q \le X^{\delta} \quad \text{and} \quad \vert q \alpha_j-a_j\vert \le X^{\delta-j} \;\forall 1 \le j \le k.
\end{equation*}
Here $\alpha:=(\alpha_1,\alpha_2,\cdots,\alpha_k)$ and $f_k(\alpha; x):=\displaystyle\sum_{n \le x} e\left(\alpha_1\,n+\alpha_2\,n^{2}+\cdots \alpha_k\,n^{k}\right)$
\end{theorem}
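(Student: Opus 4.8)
The statement is a Weyl-type inequality presented in its contrapositive, ``major-arc localization'' form: it asserts that if the Weyl sum $f_k(\alpha;X)$ is abnormally large, then the frequency vector $\alpha=(\alpha_1,\dots,\alpha_k)$ must admit a simultaneous rational approximation with a \emph{single} denominator $q\le X^{\delta}$ and coordinatewise error $|q\alpha_j-a_j|\le X^{\delta-j}$. The genuine analytic content lies in the Vinogradov mean value theorem, and the hypothesis $\tau^{-1}\ge 4k(k-1)$ is precisely the regime in which that theorem delivers a nontrivial saving on the minor arcs. The plan is therefore in two stages: first establish the mean value bound, then transfer it into the pointwise localization.

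For the mean value input, write $\mathbf n=(n,n^2,\dots,n^k)$ and consider
\[
J_{s,k}(X)=\int_{[0,1)^k}\bigl|f_k(\alpha;X)\bigr|^{2s}\,d\alpha,
\]
which counts solutions of the Vinogradov system $\sum_{i=1}^s(x_i^j-y_i^j)=0$ for $1\le j\le k$ with $1\le x_i,y_i\le X$. The goal is the essentially optimal estimate $J_{s,k}(X)\ll_\varepsilon X^{2s-k(k+1)/2+\varepsilon}$ once $s\gg k^2$. I would prove this by \emph{efficient congruencing}: introduce conditioned mean values in which one block of the variables is constrained to a fixed residue class modulo a prime power $p^{j}$ and a second block to a \emph{distinct} class modulo $p$. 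Exploiting the translation--dilation invariance of the system (it is preserved under $n\mapsto n+c$ and $n\mapsto pn$), distinctness forces the pairwise differences to be coprime to $p$, which by Hensel-type lifting shows the congruence conditions propagate from modulus $p^{j}$ to $p^{j+1}$ while losing only the ``efficient'' number of residue classes. Iterating this step---simultaneously raising the $p$-adic level and descending from scale $X$ to $X/p$---yields a functional inequality for the critical exponent $\lambda_{s,k}=\limsup_X \log J_{s,k}(X)/\log X$; letting the number of iterations tend to infinity makes the accumulated error exponents vanish and pins $\lambda_{s,k}$ to the conjectured value $2s-k(k+1)/2$.

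To transfer this to the stated localization, I would first record a stability fact: since the $j$-th coordinate of $\mathbf n$ is at most $X^{j}$, perturbing $\alpha_j$ by $O(X^{-j})$ alters each phase $e(\alpha_j n^j)$ by $O(1)$ uniformly in $n$, so $|f_k(\beta;X)|\gg X^{1-\tau}$ on the whole box $\prod_{j=1}^k[\alpha_j-cX^{-j},\alpha_j+cX^{-j}]$ of measure $\asymp X^{-k(k+1)/2}$. Combined with the mean value bound this already shows the set of large values is thin, but the crucial information---that each such box is anchored at a rational point---comes from the Weyl inequality implicit in the mean value theorem: were some coordinate $\alpha_j$ not well approximable, a Weyl-differencing estimate fed by the bound on $J_{s,k}$ would force $|f_k(\alpha;X)|\ll X^{1-\tau+\varepsilon}$, contradicting the hypothesis. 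Hence every $\alpha_j$ is individually close to a rational, and a weighted simultaneous-approximation argument consolidates these into the single denominator $q\le X^{\delta}$ with $|q\alpha_j-a_j|\le X^{\delta-j}$; the slack $\delta>k\tau$ is exactly what makes this consolidation possible.

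The main obstacle is unquestionably the efficient congruencing iteration in the second stage: controlling the conditioned mean values, verifying that the $p$-adic concentration is genuinely ``efficient,'' and managing the bookkeeping of exponents through the recursion so that the limiting critical exponent is sharp. By comparison, the transfer in the third stage is routine once the mean value bound and its companion Weyl inequality are in hand, the only care being the quantitative matching of the approximation parameters to the stated constraints $\tau^{-1}\ge 4k(k-1)$ and $\delta>k\tau$.
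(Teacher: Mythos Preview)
The paper does not prove this statement. Theorem~\ref{Wooley} is quoted verbatim from \cite{Wooley_2012} and used as a black box: the only argument in the paper that touches it is the short proof of Theorem~\ref{weyl}, which \emph{applies} the quoted result to the single coefficient $\alpha_1=m\alpha$ in order to derive a contradiction from the non-Liouville hypothesis. There is therefore nothing in the paper to compare your proposal against.

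As to the proposal itself: your outline is in the right spirit for how Wooley actually establishes the result (mean value bound for $J_{s,k}$ via efficient congruencing, followed by a transfer to a pointwise minor-arc estimate), but it is a sketch rather than a proof. The step you label ``weighted simultaneous-approximation argument consolidates these into the single denominator $q\le X^{\delta}$'' is doing real work that you have not supplied; in Wooley's paper the passage from the mean value bound to the simultaneous approximation with a common denominator goes through a specific large-sieve/major-arc dissection, not a generic Dirichlet-type consolidation. If you were genuinely asked to prove this theorem, that transfer would need to be written out carefully, and the exponent bookkeeping checked against the constraints $\tau^{-1}\ge 4k(k-1)$ and $\delta>k\tau$. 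For the purposes of the present paper, however, none of this is needed: you may simply cite \cite[Theorem~1.6]{Wooley_2012} and move on.
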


\begin{proof}[Proof of Theorem 2.1]
Let us consider 
\begin{equation*}\delta < \dfrac{1}{\omega+1},\;\rho=\dfrac{1-(\omega+1)\delta}{2\omega}\quad \text{and}\quad \tau=\dfrac{\delta}{2k(k-1)}
\end{equation*}
Then it is clear that $\delta,\tau$ satisfy the hypothesis for \textbf{Theorem \ref{Wooley}}. Suppose $m<X^{\rho}$ and assume for sake of contradiction $\vert s_m(X)\vert > X^{1-\tau}$. Now, since the coefficient of $x$ in $m \frac{P(dx)}{d}=m\alpha$, we have by \textbf{Theorem \ref{Wooley}} that $\exists q,p \in \mathbb{Z}$, such that
\begin{equation}\label{2}
\left\vert qm\alpha-p\right\vert \le X^{\delta-1} \quad \text{with} \quad gcd(p,q)=1\; \text{and}\; 1 \le q \le X^{\delta}.
\end{equation}
Now since $\alpha$ is non-Liouville, by \textbf{Definition \ref{df}}, we know that there is a natural number $\omega$ and a constant $X_0=X_0(\alpha)$ such that for all $r>X_0, \forall\, s \in \mathbb{N}, \vert r\alpha-s\vert \ge r^{-\omega}$. Now combining this with $(\ref{2})$, we have that
\begin{equation*}
\begin{split}
    (X_0qm)^{-\omega} &\le \vert X_0qm\alpha-pX_0\vert \le X_0X^{\delta-1} \\
    \Longrightarrow m &\ge X_{0}^{-\omega}X^{(1-(\omega+1)\delta)\slash \omega}>X^{(1-(\omega+1)\delta)\slash 2\omega}\\
    \quad &=X^{\rho},\quad \text{contradiction to assumption.}
\end{split}
\end{equation*}
Hence our assumption that $|s_m(X)|>X^{1-\tau}$ is wrong. So we have that \quad $|s_m(X)| \le X^{1-\tau}$\; for all $m<X^{\rho}$ \;as desired.
\qedhere
\end{proof}

\section{Sieving}
\subsection{Controlling Error Terms}
We will now apply the bounds on Weyl sums derived in the previous section along with the Erdos–Turan Discrepancy Theorem to estimate the error terms in the sieving process.

\begin{theorem}Let us define:
\begin{equation*}
\mathcal{A}_{d}(X) =: \{x \le X: d \mid gcd(x,\lfloor P(x) \rfloor\}.
\end{equation*}
Then $\exists$ constants $\mu,C,C_0 > 0$ such that 
\begin{equation*}
\displaystyle\left\vert\mathcal{A}_{d}(X)-\dfrac{X}{d^2}\right\vert \le C_0 \left(\dfrac{X}{d}\right)^{1-\mu}
\end{equation*}
\end{theorem}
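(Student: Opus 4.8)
The plan is to count $\mathcal{A}_d(X)$ by first observing that $d \mid \gcd(x, \lfloor P(x)\rfloor)$ happens precisely when $d \mid x$ and $d \mid \lfloor P(x) \rfloor$. Writing $x = dy$ with $y \le X/d$, the second condition becomes $d \mid \lfloor P(dy) \rfloor$, which is equivalent to the fractional part $\{P(dy)/d\}$ lying in the interval $[0, 1/d)$. Thus
\begin{equation*}
\mathcal{A}_d(X) = \#\left\{ y \le X/d : \left\{ \frac{P(dy)}{d} \right\} \in \left[0, \frac{1}{d}\right) \right\},
\end{equation*}
and the main term $X/d^2$ is exactly the expected count if the sequence $P(dy)/d \bmod 1$ were equidistributed. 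So the task reduces to bounding the discrepancy of the finite sequence $\left(P(dy)/d\right)_{y \le X/d}$ modulo $1$.

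Next I would invoke the Erdős–Turán inequality: the discrepancy $D_N$ of a sequence $u_1, \dots, u_N$ satisfies $D_N \ll \frac{1}{M} + \sum_{m=1}^{M} \frac{1}{m} \left| \frac{1}{N} \sum_{y} e(m u_y) \right|$ for any $M \ge 1$. Applied with $N = \lfloor X/d \rfloor$ and $u_y = P(dy)/d$, the exponential sum appearing is precisely $s_m(X/d)$ from Theorem \ref{weyl}. By that theorem, provided $m < (X/d)^{\rho}$ and $X/d \ge X_0$, we have $|s_m(X/d)| \le (X/d)^{1-\tau}$. Choosing the cutoff $M = (X/d)^{\rho}$ (or something slightly smaller to stay safely in range), the sum over $m \le M$ contributes $\ll (X/d)^{-\tau} \log M \ll (X/d)^{-\tau} \log(X/d)$ relative to $N$, i.e. an absolute discrepancy bound of size $(X/d)^{1-\tau}\log(X/d) + (X/d)^{1-\rho}$. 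Taking $\mu$ to be a constant strictly smaller than $\min(\tau, \rho)$ absorbs the logarithm, giving $D_N \cdot N \ll (X/d)^{1-\mu}$. Since $\left| \mathcal{A}_d(X) - \frac{1}{d}\cdot\frac{X}{d} \right|$ is, up to an $O(1)$ rounding term from $N = \lfloor X/d\rfloor$ versus $X/d$, bounded by $N \cdot D_N$ times the number of intervals (one interval $[0,1/d)$ here, so a single application), we obtain $\left| \mathcal{A}_d(X) - X/d^2 \right| \le C_0 (X/d)^{1-\mu}$ as claimed, with the constant $C_0$ and the threshold $C_0 > 0$ chosen uniformly.

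One point that needs care is the regime where $d$ is large compared to $X$: when $X/d < X_0$, Theorem \ref{weyl} does not apply, but then $X/d$ is bounded, so $\mathcal{A}_d(X) = O(1)$ and $X/d^2 = O(1)$ as well, and the inequality holds trivially after enlarging $C_0$; this is presumably the role of the auxiliary constant $C$ in the statement (perhaps a threshold $d \le CX$ or $X \ge C$). A second subtlety is that the Weyl bound $s_m(X/d)$ must genuinely be independent of $d$ and $m$ in the stated range — this is exactly what Theorem \ref{weyl} provides, since the hypothesis there is the non-Liouville condition on $\alpha$ alone, uniform in the parameters $d, m$.

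The main obstacle, then, is not any single deep step but the bookkeeping required to make all the constants ($\mu$, $C$, $C_0$) simultaneously consistent: one must verify that the cutoff $M$ chosen in Erdős–Turán lies within the admissible range $m < (X/d)^\rho$ of Theorem \ref{weyl}, that the resulting exponent $1-\mu$ is genuinely less than $1$ with $\mu$ a fixed positive constant depending only on $P$, and that the large-$d$ and small-$X$ edge cases are correctly folded into the error term. I expect the equidistribution-to-counting translation (the first paragraph) and the Erdős–Turán application (the second) to be routine; the care lies in the uniformity.
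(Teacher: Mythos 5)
Your proposal follows essentially the same route as the paper: reduce $d \mid \gcd(x,\lfloor P(x)\rfloor)$ to counting $y \le X/d$ with $\{P(dy)/d\} \in [0,1/d)$, apply the Erd\H{o}s--Tur\'an inequality with cutoff $T=(X/d)^{\rho}$, bound the resulting exponential sums by Theorem \ref{weyl}, and take $\mu$ slightly below $\min(\rho,\tau)$ to absorb the logarithm, treating the regime $X/d$ bounded (the role of the constant $C$) trivially. This matches the paper's argument, and in fact spells out the equidistribution reduction and the large-$d$ edge case more explicitly than the paper does.
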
 

\begin{proof}
By \textbf{Theorem \ref{weyl}}, we know that $\exists \rho,\tau > 0$ such that for large enough $X$ and for all $m<X^{\rho}$, we have that $s_m(X) \le X^{1-\tau}$. Thus we can find a positive constant $C$ such that whenever $\frac{X}{d} > C$, we have that \begin{equation*}
s_m\left(X\slash d\right) \le \left(\dfrac{X}{d}\right)^{1-\tau} \quad \text{for all}\quad m<\left(\dfrac{X}{d}\right)^{\rho}.
\end{equation*}        
Now by Erdos-Turan-Discrepancy Theorem, we know that:
\begin{equation*}
\displaystyle\left\vert\left\vert\mathcal{A}_{d}(X)\right\vert - \frac{X}{d^{2}}\right\vert \le C_0 \left(\dfrac{X}{dT}+\displaystyle\sum_{m \le T} \dfrac{\left\vert s_{m}\left(X\slash d\right)\right\vert}{m}\right).
\end{equation*}
Finally choosing $T=\left(\dfrac{X}{d}\right)^{\rho}$ and $\mu:=\max\{1-\rho, 1-\tau\slash 2\}$, we obtain the desired bound.
\end{proof}
\subsection{Doing The Sieving}
\begin{theorem}Let $P \in \mathbb{R}[x]$ be a polynomial such that the coefficient of $x$ in $P$ is \textbf{non-Liouville}. Then we have that:
\begin{equation*}
\begin{split}
S(X)&=\dfrac{X}{\zeta{\left(2\right)}}+O\left(\dfrac{X}{(\log{\log{X}})^{1\slash 3}}\right)\quad \text{and hence}\quad \lim_{X \rightarrow \infty}\dfrac{S(X)}{X}=\frac{1}{\zeta(2)}\\ 
\text{where} \quad S(X)&=: \#\{x \le X: \gcd{\left(x,\lfloor P(x) \rfloor\right)}=1\}.
\end{split}    
\end{equation*}
\end{theorem}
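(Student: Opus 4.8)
The plan is to run an Eratosthenes--Legendre sieve, feeding in the level-set estimate of Theorem 4.2 and then choosing the sifting parameter to balance the resulting errors. Write $f(x)=\gcd(x,\lfloor P(x)\rfloor)$, let $z=z(X)\to\infty$ be a parameter to be fixed at the end, and put $Q=Q(z)=\prod_{p\le z}p$. By inclusion--exclusion over the (necessarily squarefree) divisors of $Q$,
\begin{equation*}
\#\{x\le X:\ \gcd(f(x),Q)=1\}=\sum_{d\mid Q}\mu(d)\,\mathcal{A}_d(X),
\end{equation*}
and this quantity overcounts $S(X)$ only by those $x\le X$ with $f(x)>1$ all of whose prime factors exceed $z$; any such $x$ has $p\mid f(x)\mid x$ for some prime $p\in(z,X]$, so it lies in $\mathcal{A}_p(X)$. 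Hence I would start from
\begin{equation*}
S(X)=\sum_{d\mid Q}\mu(d)\,\mathcal{A}_d(X)+O\!\left(\sum_{z<p\le X}\mathcal{A}_p(X)\right),
\end{equation*}
noting that every $d\mid Q$ satisfies $d\le Q<e^{2z}=(\log X)^{o(1)}$ for the eventual choice of $z$, so $X/d\to\infty$ and the estimate of Theorem 4.2 is available for each such $d$ once $X$ is large.

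For the main sum I would substitute $\mathcal{A}_d(X)=X/d^2+O\!\big((X/d)^{1-\mu}\big)$, giving
\begin{equation*}
\sum_{d\mid Q}\mu(d)\,\mathcal{A}_d(X)=X\sum_{d\mid Q}\frac{\mu(d)}{d^2}+O\!\left(X^{1-\mu}\sum_{d\mid Q}d^{-(1-\mu)}\right)=X\prod_{p\le z}\!\left(1-\frac1{p^2}\right)+O\!\left(X^{1-\mu}\,2^{\pi(z)}\right),
\end{equation*}
where $\sum_{d\mid Q}d^{-(1-\mu)}=\prod_{p\le z}\!\big(1+p^{-(1-\mu)}\big)\le 2^{\pi(z)}$. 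Then $\prod_{p\le z}(1-p^{-2})=\zeta(2)^{-1}\prod_{p>z}(1-p^{-2})^{-1}=\zeta(2)^{-1}\big(1+O(1/z)\big)$ because $\sum_{n>z}n^{-2}=O(1/z)$, so the main sum equals $X/\zeta(2)+O(X/z)+O\!\big(X^{1-\mu}2^{\pi(z)}\big)$.

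For the tail I would split the primes $p\in(z,X]$ at $X/C$, where $C$ is the lower bound on $X/d$ required by Theorem 4.2. For $z<p\le X/C$, Theorem 4.2 gives $\mathcal{A}_p(X)\le X/p^2+C_0(X/p)^{1-\mu}$, hence
\begin{equation*}
\sum_{z<p\le X/C}\mathcal{A}_p(X)\ \le\ X\sum_{p>z}\frac1{p^2}\ +\ C_0\,X^{1-\mu}\sum_{p\le X}p^{-(1-\mu)}\ =\ O\!\left(\frac{X}{z}\right)+O\!\left(\frac{X}{\log X}\right),
\end{equation*}
where $\sum_{p\le Y}p^{-(1-\mu)}\ll Y^{\mu}/\log Y$ comes from Chebyshev's bound $\pi(Y)\ll Y/\log Y$ by partial summation. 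For $X/C<p\le X$ I would use only the trivial bound $\mathcal{A}_p(X)\le\lfloor X/p\rfloor\le C$ together with $\#\{p\le X\}\ll X/\log X$, which contributes $O(X/\log X)$. This gives $\sum_{z<p\le X}\mathcal{A}_p(X)=O(X/z+X/\log X)$.

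Assembling the pieces, $S(X)=X/\zeta(2)+O(X/z)+O(X/\log X)+O\!\big(X^{1-\mu}2^{\pi(z)}\big)$, and choosing $z=(\log\log X)^{1/3}$ makes $X^{1-\mu}2^{\pi(z)}\le X^{1-\mu}(\log X)^{o(1)}\ll X^{1-\mu/2}$ and $X/\log X$ both $o\!\big(X/(\log\log X)^{1/3}\big)$, yielding $S(X)=X/\zeta(2)+O\!\big(X/(\log\log X)^{1/3}\big)$ and, on dividing by $X$, the density $1/\zeta(2)$. The hard part is the large-prime tail $\sum_{z<p\le X}\mathcal{A}_p(X)$: the trivial bound $\mathcal{A}_p(X)\le\lfloor X/p\rfloor$ alone only gives $\sum_{z<p\le X}\mathcal{A}_p(X)\ll X\log\log X$, so one genuinely needs the square-power saving $\mathcal{A}_p(X)\approx X/p^2$ of Theorem 4.2 over the bulk of the range, plus Chebyshev's prime-counting bound for the handful of primes too large for Theorem 4.2 to apply; and in estimating $\sum_{p\le X}p^{-(1-\mu)}$ one must keep the $1/\log$ coming from the density of the primes, since treating them as all integers would return an unusable $O(X)$. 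The remaining estimates are routine book-keeping, and it is the tension between the Euler-product error $O(X/z)$ and the sieve error $O(X^{1-\mu}2^{\pi(z)})$ that governs how large $z$, and hence how small the final error, can be taken.
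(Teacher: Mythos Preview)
Your argument is correct, and it takes a genuinely different route from the paper's. The paper sets $z=X^{c/\log\log X}$, so $P_z=\prod_{p<z}p$ has divisors as large as $X$; it then has to split the inclusion--exclusion sum $\sum_{d\mid P_z}\mu(d)|\mathcal{A}_d|$ at $X/\log^{\epsilon}z$ and invoke the Hardy--Ramanujan normal-order theorem to show that few $d\mid P_z$ with $d>X/\log^{\epsilon}z$ exist, while the tail $S(X)-S(X,z)$ is controlled by summing $|\mathcal{A}_d|$ over \emph{all} $d$ coprime to $P_z$ and appealing to a Mertens/fundamental-lemma density estimate. You instead take $z=(\log\log X)^{1/3}$, so every $d\mid Q$ is tiny and Theorem~4.2 applies uniformly with error $X^{1-\mu}2^{\pi(z)}=X^{1-\mu+o(1)}$; and you bound the tail by summing only over \emph{primes} $p>z$, where Theorem~4.2 plus Chebyshev gives $O(X/z)+O(X/\log X)$. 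This sidesteps Hardy--Ramanujan and the sieve density input entirely, making the proof more elementary. In fact your framework is sharper than you use it: the constraint on $z$ is only $2^{\pi(z)}\le X^{\mu/2}$, so taking $z$ of order $\log X$ (rather than $(\log\log X)^{1/3}$) would yield $S(X)=X/\zeta(2)+O(X/\log X)$, a power-of-$\log$ saving over the stated bound.
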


\begin{proof} Let us define
\begin{equation*}S(X,z):=\#\{x \le X: \gcd{\left(\gcd{\left(x,\lfloor P(x) \rfloor\right)},P_z\right)}=1\}.
\end{equation*}
Let $\epsilon, A> 0$ be arbitrary. Then we notice that
\begin{equation*}
\begin{split}
    S(X)-S(X,z) &\le \displaystyle\sum_{(d,P_{z})=1;\,d \le X} \vert\mathcal{A}_{d}\vert\\
    \quad &= \displaystyle\sum_{(d,P_{z})=1;\,d \le X\slash \log^{\epsilon}{z}} \vert \mathcal{A}_{d}\vert+\displaystyle\sum_{(d,P_{z})=1;\,d \ge X\slash \log^{\epsilon}{z}} \vert\mathcal{A}_{d}\vert
\end{split}
\end{equation*}
Now we shall estimate the the sums on the RHS seperately.
\begin{equation*}
\begin{split}
    \displaystyle\sum_{(d,P_{z})=1;\,d \le X\slash \log^{\epsilon}{z}} \vert \mathcal{A}_{d}\vert &\le \displaystyle\sum_{(d,P_{z})=1;\,d \le X\slash \log^{\epsilon}{z}} \dfrac{X}{d^{2}}+O\left(\left(\dfrac{X}{d}\right)^{1-\mu}\right)\\
    \quad &\le \displaystyle\sum_{d>z}\dfrac{X}{d^{2}}+O\left(\dfrac{X}{\log^{\epsilon \mu}{z}}\right)\\
    \quad &=O\left(\dfrac{X}{\log^{\epsilon \mu}{z}}\right)
\end{split}
\end{equation*}
Furthermore, we also have
\begin{equation*}
\begin{split}
    \displaystyle\sum_{(d,P_{z})=1;\,d \ge X\slash \log^{\epsilon}{z}} \vert\mathcal{A}_{d}\vert &\le {\log^{\epsilon}{z}}\displaystyle\sum_{(d,P_{z})=1;\,d \ge X\slash \log^{\epsilon}{z}} 1\\
    \quad &=\log^{\epsilon}{z}\left(X\left(\displaystyle\prod_{p<z}\left(1-\dfrac{1}{p}\right)+O\left(\dfrac{1}{\log^{A}{X}}\right)\right)\right) \\
    (\text{for} \quad z :&= X^{c \slash \log{log{X}}} \quad\text{where} \quad c=1\slash(2(A+1)))\\
    \quad &\le \dfrac{X}{\log^{1-\epsilon}{z}}+O\left(\dfrac{X}{\log^{A}{X}}\right)
\end{split}
\end{equation*}
Thus in conclusion, we have that
\begin{equation*}
\begin{split}
    S(X)-S(X,z) &=O\left(\dfrac{X}{\log^{1-\epsilon}{z}}\right)+O\left(\dfrac{X}{\log^{\epsilon\mu}{z}}\right)+O\left(\dfrac{X}{\log^{A}{X}}\right)\\
    \text{where}\quad z &= X^{c \slash \log{\log{X}}}.
\end{split}
\end{equation*}
So now it just remains for us to estimate $S(X,z)$.
For this, we observe that
\begin{equation}\label{f}
\begin{split}
    S(X,z) &= \sum_{d \mid P_{z};\, d<X} \mu(d)\vert \mathcal{A}_{d}\vert\\
    \quad &= X\left(\displaystyle\prod_{p<z}\left(1-\dfrac{1}{p^{2}}\right)+O\left(\dfrac{1}{X}\right)\right)+O\left(\displaystyle\sum_{d<X\slash \log^{\epsilon}{z}}\left(\dfrac{X}{d}\right)^{1-\mu}\right)\\
    \quad &+ \displaystyle\sum_{d \mid P_z;\,X\slash \log^{\epsilon}{z}<d \le X}\vert\mathcal{A}_{d}\vert 
\end{split}
\end{equation}
Now if $d \mid P_z$ with $w(d)<\log{\log{d}}+(\log{\log{d}})^{2\slash3}$ and\; $d>\dfrac{X}{\log{z}}$,\; then we can find a prime factor of $d$ of size atleast $M=d^{1\slash{2\log{\log{d}}}}$.
We claim $M>z$. To see this, we note that
\begin{equation*}
\begin{split}
    \log{M} &=\dfrac{\log{d}}{2\log{\log{d}}}\\
    \quad &\ge\dfrac{\log{X}}{\log{\log{X}}}\left(\dfrac{1}{2}-\dfrac{c}{2\log{\log{X}}}\right)\\
    \quad &>\dfrac{\log{X}}{2(A+1)\log{\log{X}}}>\log{z} \quad (\text{for large enough}\quad X)
    \end{split}
\end{equation*}
Thus $M >z.$ But then this means that $d$ has a prime factor $>z$ and thus $d \not\vert P_z$, a contradiction.
Therefore the contribution of the last sum in $\left(\ref{f}\right)$ is atmost
\begin{equation*}
\begin{split}
\#\{n<X: \vert \omega(n)&-\log{\log{n}}\vert>{(\log{\log{n})}}^{2\slash3}\}<\dfrac{X}{(\log{\log{X})^{1\slash 3}}}\\
\quad &\, (\text{Hardy-Ramanujan Theorem})
\end{split}
\end{equation*}
So in conclusion we have that
\begin{equation*}
S(X,z)=\dfrac{X}{\zeta{\left(2\right)}}+O\left(\dfrac{X}{\log^{\epsilon\mu}{z}}\right)+O\left(\dfrac{X}{(\log{\log{X})}^{1\slash 3}}\right)
\end{equation*}
Combining this with the estimate for $S(X)-S(X,z)$ we obtained earlier gives us 
\begin{equation*}
    S(X)=\dfrac{X}{\zeta{\left(2\right)}}+O\left(\dfrac{X}{(\log{\log{X})}^{1\slash 3}}\right) \quad \text{as desired.}
\end{equation*}
\qedhere
\end{proof}

\end{document}